\theoremstyle{plain}
\newtheorem{thm}{Theorem}[section]
\newtheorem*{thm*}{Theorem}
\newtheorem*{cor*}{Corollary}
\newtheorem*{defn*}{Definition}
\newtheorem{lem}[thm]{Lemma}
\newtheorem{cor}[thm]{Corollary}
\newtheorem*{claim*}{Claim}
\theoremstyle{definition}
\newtheorem{ex}[thm]{Example}
\newtheorem{rem}[thm]{Remark}
\theoremstyle{remark}
\numberwithin{equation}{thm}
\def\Ext{\mathrm{Ext}}
\def\e{\mathrm{e}}
\def\m{\mathfrak m}
\def\n{\mathfrak n}
\def\p{\mathfrak p}
\def\Z{\Bbb Z}
\def\H{\mathrm{H}}
\newcommand{\rmc}{\mathrm{c}}
\newcommand{\rme}{\mathrm{e}}
\newcommand{\rmr}{\mathrm{r}}
\newcommand{\rmH}{\mathrm{H}}
\newcommand{\rmQ}{\mathrm{Q}}
\newcommand{\fkm}{\mathfrak{m}}
\newcommand{\fkn}{\mathfrak{n}}
\newcommand{\fkp}{\mathfrak{p}}
\begin{document}

\setlength{\baselineskip}{14pt}
%%%%%%%%%%%%%%%%%%%%%%%%%%%%%%%%%%%%%%%%%%%%%%%%%%%%%%%%%%%%%

\title{Bounds for the first Hilbert coefficients of $\m$-primary ideals}
\pagestyle{plain}
%\author{Shiro Goto}
%\address{Department of Mathematics, School of Science and Technology, Meiji University, 1-1-1 Higashi-mita, Tama-ku, Kawasaki 214-8571, Japan}
%\email{goto@math.meiji.ac.jp}
\author{Asuki Koura}
\address{Otsuma Nakano Junior and Senior High School, 2-3-7, Kamitakata, Nakano-ku, Tokyo 164-0002, Japan}
\email{koura@otsumanakano.ac.jp}

\author{Naoki Taniguchi}
\address{Department of Mathematics, Graduate Courses, School of Science and Technology, Meiji University, 1-1-1 Higashi-mita, Tama-ku, Kawasaki 214-8571, Japan}
\email{taniguti@math.meiji.ac.jp}

\thanks{{AMS 2010 {\em Mathematics Subject Classification:}
13D40, 13H10, 13H15.}}
%%%%%%%%%%%%%%%%%%%%%%%%%%%%%%%%%%%%%%%%%%%%%%%%%%%%%%%%%%%%%
%%%%%%%%%%%%%%%%%%%%%%%%%%%%%%%%%%%%%%%%%%%%%%%%%%%%%%%%%%%%%
\begin{abstract}
This paper purposes to characterize Noetherian local rings $(A,\fkm)$ of positive dimension such that the first Hilbert coefficients of $\fkm$-primary ideals in $A$ range among only finitely many values. Examples are explored.
\end{abstract}

\maketitle

%{\footnotesize \tableofcontents}

%%%%%%%%%%%%%%%%%%%%%%%%%%%%%%%%%%%%%%%%%%%%%%%%%%%%%%%%%%%%%%%%%%%%%%%%%%%%%%%%%%%%%%%%%%%%%%%%%%%%%%%%%%%%%%%%%%%%%%%%%%%%%%%%%%%%%%%%%%%%%%%%%%%%%%%%%%%%%%%%%%%%%%%%%%%%%%%%   Section1    %%%%%%%%%%%%%%%%%%%%%%%%%%%%%%%%%%%%%%%%%%%%%%%%%%%%%%%%%%%%%%%%%%%%%%%%%%%%%%%%%%%%%%%%%%%%%%%%%%%%%%%%%%%%%%%%%%%%%%%%%%%%%%%%%%%%%%%%%%%%%%%%%%%%%%%%%%%%%%%%%%%%%%%%%%%%%%%%%%%%%%%%%%%%%%%%%%%%%%%%%%%%%%%%

\section{Introduction} 
Let $A$ be a commutative Noetherian local ring with maximal ideal $\m$ and $d = \dim{A} > 0$. For each $\m$-primary ideal $I$ in $A$ we set
$$
\H_I(n) = \ell_A(A/I^{n+1}) \ \  \mbox{for} \ \  n \ge 0
$$
and call it the Hilbert function of $A$ with respect to $I$, where $\ell_A(A/I^{n+1})$ denotes the length of the $A$-module $A/I^{n+1}$. Then there exist integers $\{\e_i(I)\}_{0 \le i \le d}$ such that 
$$
\H_I(n) = \rme_0(I) \binom{n+d}{d} - \rme_1(I) \binom{n+d-1}{d-1} + \cdots + (-1)^d \rme_d(I) \ \ \mbox{for all}\ \ n \gg 0.$$ The integers $\e_i(I)'s$ are called the  Hilbert coefficients of $A$ with respect to $I$. These integers describe the complexity of given local rings, and there are a huge number of  preceding papers  about them, e.g., \cite{GhGHOPV1, GhGHOPV2, GMP, GNO1, GNO2}. In particular, the integer $\e_0(I) > 0$ is called the multiplicity of $A$ with respect to $I$ and has been explored very intensively.  One of the most spectacular results on the multiplicity theory  says that \textit{$A$ is a regular local ring if and only if $\e_0({\m}) = 1$}, provided $A$ is unmixed. This was proven by P. Samuel \cite{Samuel} in the case where $A$ contains a field of characteristic $0$ and then by  M. Nagata \cite{Nagata2} in the above form. Recall that a local ring $A$ is $unmixed$, if $\dim {\widehat{A}} = \dim {\widehat{A}/\p}$ for every associated prime ideal $\p$ of the $\fkm$-adic completion $\widehat{A}$ of $A$. The Cohen-Macaulayness in $A$ is characterized in terms of $\e_0(Q)$ of parameter ideals $Q$ of $A$. On the other hand, in \cite{GhGHOPV1} the authors analyzed the boundness of the values $\e_1(Q)$ for parameter ideals $Q$ of $A$ and deduced that the local cohomology modules $\{\rmH^i_\fkm(A)\}_{i \ne d}$ are finitely generated, once $A$ is unmixed and the set $\Lambda (A) = \{\e_1(Q) \mid Q \  \mbox{is a parameter ideal of} \ A\}$ is finite.

In the present paper we focus on the first Hilbert coefficients $\e_1(I)$ for $\fkm$-primary ideals $I$ of $A$. Our study dates back to  the paper of M. Narita \cite{Na}, who showed that if $A$ is a Cohen-Macaulay local ring, then  $\e_1(I) \ge 0$, and also $\e_2(I) \ge 0$ when $d = \dim A  \ge 2$. We consider the set 
$$
\Delta (A) = \{\e_1(I) \mid I\text{ is an }\m\text{-primary ideal in }A \}
$$ and are interested in the problem of when $\Delta (A)$ is finite. Under the light of  Narita's theorem, if $A$ is a Cohen-Macaulay local ring of positive dimension, our problem is equivalent to the question of  when the values $\e_1(I)$ has a finite upper bound, and Theorem \ref{1.1} below settles the question, showing that such Cohen-Macaulay local rings are exactly of dimension one and analytically unramified, where $\rmH_\fkm^0(A)$ denotes the $0$-th local cohomology module of $A$ with respect to $\fkm$.

\begin{thm}\label{1.1}
Let $(A, \m)$ be a Noetherian local ring with $d=\dim A > 0$. Then the following conditions are equivalent.
\begin{enumerate}[$(1)$]
\item $\Delta(A)$ is a finite set.
\item $d=1$ and $A/\rmH_\fkm^0(A)$ is analytically unramified.
\end{enumerate}
\end{thm}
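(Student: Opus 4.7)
I structure the proof into a reduction to the ring $B := A/\rmH^0_\fkm(A)$ applicable in both directions, followed by separate treatments of the two implications.

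\emph{Step 1: reduction to $B$.} Since $H := \rmH^0_\fkm(A)$ is a Noetherian $\fkm$-torsion module, it has finite length $h := \ell_A(H)$. Setting $B := A/H$ gives $\depth B \geq 1$. By Artin--Rees, $I^{n+1} \cap H = 0$ for $n \gg 0$, so $\ell_A(A/I^{n+1}) = \ell_B(B/I^{n+1}B) + h$ eventually. Since every $\fkm B$-primary ideal of $B$ equals $IB$ for some $\fkm$-primary $I$ in $A$, comparing Hilbert polynomials shows that $\Delta(A)$ is a constant translate of $\Delta(B)$, so their finiteness is equivalent.

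\emph{Step 2: $(1) \Rightarrow (2)$.} First I show $d = 1$. Expanding $\ell_A(A/\fkm^{n(k+1)})$ as a polynomial in $k$ via the Hilbert polynomials of $\fkm$ and of $\fkm^n$ and matching coefficients of $k^{d-1}$ yields
\[
\e_1(\fkm^n) \ =\ n^{d-1}\!\left[\e_1(\fkm) + \tfrac{(d-1)(n-1)}{2}\,\e_0(\fkm)\right].
\]
For $d \geq 2$ this is a non-constant polynomial in $n$ (as $\e_0(\fkm) \geq 1$), so $\Delta(A)$ would be infinite. Hence $d = 1$ and $B$ is Cohen--Macaulay. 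To show $B$ is analytically unramified, I argue by contradiction. If it is not, Nagata's theorem for dimension one says the integral closure $\bar B$ of $B$ in its total ring of quotients fails to be a finite $B$-module, whence $\ell_B(\bar B/B) = \infty$. For each $n \geq 1$, pick a finite $B$-subalgebra $B_n \subseteq \bar B$ with $\ell_B(B_n/B) \geq n$ and a non-zerodivisor $s_n \in B$ with $s_n B_n \subseteq B$. Setting $I_n := s_n B_n$, the ring identity $B_n^2 = B_n$ gives $I_n^2 = s_n I_n$, so $(s_n)$ is a principal reduction of $I_n$ with reduction number one, and $I_n^{m+1} = s_n^m I_n$ for $m \geq 1$. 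Combining this with the isomorphism $B_n/B \cong I_n/s_n B$ (multiplication by $s_n$, injective as $s_n$ is a non-zerodivisor) and the exact sequence $0 \to I_n/s_n B \to B/s_n B \to B/I_n \to 0$, a short length computation yields $\e_1(I_n) = \e_0(I_n) - \ell_B(B/I_n) = \ell_B(B_n/B) \geq n$, contradicting the finiteness of $\Delta(B)$.

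\emph{Step 3: $(2) \Rightarrow (1)$.} Assume $d = 1$ and $B$ analytically unramified. By Nagata's theorem $\bar B$ is finite over $B$, so $c := \ell_B(\bar B/B) < \infty$. For an $\fkm B$-primary $J$ with principal reduction $(x)$, $J^n\bar B = x^n\bar B$ for $n \gg 0$, whence $\ell_B(\bar B/J^n\bar B) = n\,\e_0(J)$. The inclusion $J^n \subseteq J^n\bar B \cap B$ and the exact sequence
\[
0 \to B/(J^n\bar B \cap B) \to \bar B/J^n\bar B \to \bar B/(B + J^n\bar B) \to 0,
\]
whose cokernel has length at most $c$, give $\ell_B(B/J^n) \geq n\,\e_0(J) - c$. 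Comparing with the Hilbert polynomial of $J$ yields $\e_1(J) \leq c$, and together with Narita's inequality $\e_1(J) \geq 0$ we get $\Delta(B) \subseteq \{0, 1, \ldots, c\}$, so $\Delta(A)$ is finite.

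\emph{Main obstacle.} The delicate step is the second half of Step 2: from the qualitative statement that $\bar B$ is not finite over $B$ one must extract a quantitative unbounded sequence of first Hilbert coefficients. The key observation is that the construction $I_n = s_n B_n$, where $B_n$ is a subring of $\bar B$ (so $B_n^2 = B_n$), automatically produces an ideal of reduction number one, which in turn makes $\e_1(I_n)$ tractable and equal to $\ell_B(B_n/B)$, thereby converting the infinite length of $\bar B/B$ into unbounded $\e_1$-values.
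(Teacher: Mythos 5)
Your proposal is correct in substance and follows essentially the same route as the paper: the reduction modulo $W=\rmH^0_\fkm(A)$ with $\e_1(I)=\e_1(IB)-\ell_A(W)$ is the paper's Theorem 3.2, your rescaling formula for $\e_1(\fkm^n)$ is exactly the paper's Lemma 3.1, the ideals $I_n=s_nB_n$ with $I_n^2=s_nI_n$ and $\e_1(I_n)=\ell_B(B_n/B)$ are the paper's Corollary 2.2 combined with Lemma 2.3, and your Step 3 is a self-contained version of the bound $\e_1(I)=\ell_B(B[I/x]/B)\le\ell_B(\overline{B}/B)$ that the paper imports from \cite[Lemma 2.1]{GMP}. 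The one genuine gap is in Step 3: you quantify over ``an $\fkm B$-primary $J$ with principal reduction $(x)$,'' but to conclude $\Delta(B)\subseteq\{0,1,\ldots,c\}$ you need \emph{every} $\fkm B$-primary ideal to admit a principal reduction, which can fail when the residue field is finite; the paper handles this by first passing to the faithfully flat extension with infinite residue field (``enlarging the residue class field''), a step you should add since it preserves $\e_1$ and all the lengths involved.
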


We prove Theorem \ref{1.1} in Section 3. Section 2 is devoted to preliminaries for the proof.  Let $\overline{A}$ denote the integral closure of $A$ in the total ring of fractions of $A$. The key  is the following, which we shall prove in Section $2$.

\begin{thm}\label{1.2}
Let $(A, \m)$ be a Cohen-Macaulay local ring with $\dim A=1$. Then
$$
\sup \Delta(A) = \ell_{A}(\overline{A}/A).
$$
Hence $\Delta(A)$ is a finite set if and only if $A$ is analytically unramified.
\end{thm}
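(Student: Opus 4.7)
The plan is to identify $\e_1(I)$ with the length of a certain finite $A$-subalgebra of $\overline{A}$ modulo $A$, and then to vary $I$ so that this subalgebra exhausts (or grows without bound inside) $\overline{A}$. First I would reduce to the case of infinite residue field by replacing $A$ with $A(X) := A[X]_{\m A[X]}$, a move harmless for all invariants in sight: $\e_1$, the Cohen--Macaulay property, analytic unramifiedness, and $\ell_A(\overline{A}/A)$. Assuming $A/\m$ infinite, every $\m$-primary ideal $I$ admits a principal reduction $(a) \subset I$ with $a$ a non-zerodivisor, and the Cohen--Macaulay hypothesis gives $\e_0(I) = \e_0((a)) = \ell_A(A/(a))$. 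Comparing the Hilbert polynomials of $I$ and $(a)$ then yields, for $n \gg 0$,
\[
\e_1(I) \;=\; \ell_A(A/(a^{n+1})) - \ell_A(A/I^{n+1}) \;=\; \ell_A(I^{n+1}/(a^{n+1})) \;=\; \ell_A(a^{-(n+1)}I^{n+1}/A),
\]
the last step via multiplication by $a^{-(n+1)}$ inside the total ring of fractions $\mathrm{Q}(A)$.

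For the upper bound I would set $B_I := \bigcup_{n \geq 0} a^{-n} I^n \subset \mathrm{Q}(A)$. This is an $A$-subalgebra, and the ascending chain stabilizes once $I^{n+1} = a I^n$, which holds for $n \gg 0$; hence $B_I = a^{-r} I^r$ for some $r$, and in particular is a finite $A$-module. Being a finite ring extension of $A$ inside $\mathrm{Q}(A)$, $B_I$ is integral over $A$ and hence sits in $\overline{A}$, so
\[
\e_1(I) \;=\; \ell_A(B_I/A) \;\leq\; \ell_A(\overline{A}/A),
\]
giving $\sup \Delta(A) \leq \ell_A(\overline{A}/A)$ unconditionally.

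For the reverse inequality I would invert the construction: given any finite $A$-subalgebra $B$ with $A \subsetneq B \subseteq \overline{A}$, choose a non-zerodivisor $a \in \m$ with $aB \subset A$ (clear denominators of a generating set, then multiply by any non-zerodivisor of $\m$), and put $I := aB$. Because $B$ is a subring, $I^{n+1} = a^{n+1} B = a \cdot I^n$, so $(a)$ is a reduction of $I$. The ideal $I$ is $\m$-primary (it contains $aA$ and is properly contained in $A$, since $1/a \in \overline{A}$ would force the equation $1 = -a(c_{n-1} + \cdots + c_0 a^{n-1})$ from a monic integral relation, contradicting $a \in \m$), and the formula above yields
\[
\e_1(I) \;=\; \ell_A(a^{n+1}B / a^{n+1} A) \;=\; \ell_A(B/A).
\]
If $A$ is analytically unramified, $\overline{A}$ itself is a finite $A$-module (Krull--Akizuki / Mori--Nagata), so $B = \overline{A}$ achieves the bound. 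If $A$ is analytically ramified, $\overline{A}$ is a directed union of finite $A$-subalgebras $B$ with $\ell_A(B/A)$ unbounded, and the construction produces $\e_1(I)$ arbitrarily large. In either case $\sup \Delta(A) = \ell_A(\overline{A}/A)$; the second assertion of the theorem then follows from Narita's $\e_1(I) \geq 0$, which makes boundedness equivalent to finiteness for the subset $\Delta(A)$ of nonnegative integers.

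The main technical hurdle is the identification $\e_1(I) = \ell_A(B_I/A)$, which rests squarely on the Cohen--Macaulay length identity $\ell_A(A/(a^{n+1})) = (n+1)\e_0(I)$ and on the finiteness of the reduction number. A secondary nuisance is the passage to an infinite residue field through $A(X)$ and the verification that $I = aB$ is $\m$-primary, both of which are standard.
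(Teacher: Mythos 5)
Your proposal is correct and takes essentially the same route as the paper: the identification $\e_1(I)=\ell_A\bigl(A[\tfrac{I}{a}]/A\bigr)$ for a principal reduction $(a)$ (which the paper imports as Lemma \ref{2.1} from \cite{GMP}) gives the upper bound because $A[\tfrac{I}{a}]\subseteq\overline{A}$, and the reverse construction $I=aB$ for module-finite intermediate rings $A\subseteq B\subseteq\overline{A}$ (the paper's Corollary \ref{2.2} and Lemma \ref{2.3}) realizes arbitrarily large, or the maximal, values of $\ell_A(B/A)$ as first Hilbert coefficients. The only cosmetic difference is that you rederive the length identity and invoke Narita's nonnegativity at the end, whereas the paper cites the former and gets nonnegativity for free from $\e_1(I)=\ell_A(B/A)\ge 0$.
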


\noindent
For the proof we need particular calculation of $\e_1(I)$ in one-dimensional Cohen-Macaulay local rings, which we  explain also in Section 2.

When $A = k[[t^{a_1},t^{a_2},\ldots, t^{a_\ell}]]$ is the semigroup ring of a numerical semigroup $H 
%\left<a_1, a_2, \ldots, a_{\ell} \right> 
=\{\sum_{i=1}^\ell c_ia_i \mid 0 \le c_ i \in \Bbb Z\}$ over a field $k$ (here $t$ is the indeterminate over $k$ and $0 < a_1 < a_2 < \ldots <a_{\ell}$ be integers such that $\operatorname{gcd}(a_1, a_2, \ldots, a_\ell) = 1$), the set $\Delta (A)$ is finite and $\Delta (A) = \{0, 1, \ldots, \sharp (\Bbb N \setminus H)\}$, where $\Bbb N$ denotes the set of non-negative integers (Example \ref{4.1}). However, despite  this result and the fact $\sup \Delta(A) = \ell_{A}(\overline{A}/A)$ in Theorem \ref{1.2}, the equality
$$
\Delta(A) = \{ n \in \Bbb Z \mid 0 \le n \le \ell_A(\overline{A}/A) \}
$$
does not necessarily hold true in general.
In Section 4 we will explore several concrete examples, including  an example for which the equality is not true (Example \ref{4.7}).

Unless otherwise specified, throughout this paper let $A$ be a Noetherian local ring with maximal ideal $\fkm$ and $d = \dim A > 0$. Let $\rmQ (A)$ denote the total ring of fractions of $A$. For each finitely generated $A$-module $M$, let $\ell_A(M)$ and $\mu_A(M)$ denote respectively the length of and the number of elements in a minimal system of generators of $M$.

%%%%%%%%%%%%%%%%%%%%%%%%%%%%%%%%%%%%%%%%%%%%%%%%%%%%%%%%%%%%%%%%%%%%%%%%%%%%%%%%%%%%%%%%%%%%%%%%%%%%%%%%%%%%%%%%%%%%%%%%%%%%%%%%%%%%%%%%%%%%%%%%%%%%%%%%%%%%%%%%%%%%%%%%%%%%%%%%   Section2    %%%%%%%%%%%%%%%%%%%%%%%%%%%%%%%%%%%%%%%%%%%%%%%%%%%%%%%%%%%%%%%%%%%%%%%%%%%%%%%%%%%%%%%%%%%%%%%%%%%%%%%%%%%%%%%%%%%%%%%%%%%%%%%%%%%%%%%%%%%%%%%%%%%%%%%%%%%%%%%%%%%%%%%%%

\section{Proof of Theorem \ref{1.2}}
In this section let $(A, \m)$ be a Cohen-Macaulay local ring with $\dim A = 1$. Let $I$ be an $\m$-primary ideal of $A$ and assume that $I$ contains a parameter ideal $Q = (a)$ as a reduction. Hence there exists an integer $r \ge 0$ such that $I^{r+1} = QI^r$. This assumption is automatically satisfied, when the residue class field $A/\m$ of $A$ is infinite. We set
$$
\frac{I^n}{a^n} =\left\{ \frac{x}{a^n} \left| \frac{}{} \right. x \in I^n \right\} \subseteq \rmQ (A) \ \  \mbox{for} \ \  n \ge 0
$$
and let
$$
B = A \biggl[ \frac{I}{a} \biggr] \subseteq {\rmQ (A)},
$$
where $\rmQ (A)$ denotes the total ring of fractions of $A$. Then
$$
B ={\bigcup_{n \ge 0}\frac{I^n}{a^n}}= \frac{I^r}{a^r} \cong I^r
$$
as an $A$-module, because ${\frac{I^n}{a^n} = \frac{I^r}{a^r}}$ if $n \geq r$ as $\frac{I^n}{a^n}\subseteq \frac{I^{n+1}}{a^{n+1}}$ for all $n \ge 0$. Therefore $B$ is a finitely generated $A$-module, whence $A\subseteq B \subseteq \overline{A}$, where $\overline{A}$ denotes the integral closure of $A$ in $\rmQ (A)$. We furthermore have the following.

\begin{lem}[{\cite[Lemma 2.1]{GMP}}]\label{2.1} 
\item[$(1)$] $\e_0(I) = \ell_{A}(A/Q)$.
\item[$(2)$] $\e_1(I) = \ell_{A}(I^r/Q^r)=\ell_{A}(B/A) \leq \ell_{A}(\overline{A}/A)$.
\end{lem}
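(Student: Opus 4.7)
The plan is to deduce (1) from standard reduction/multiplicity theory, obtain (2) by computing $\H_I(n)$ for $n \ge r$ via the exact sequence $0\to I^{n+1}/Q^{n+1}\to A/Q^{n+1}\to A/I^{n+1}\to 0$, and then identify $I^r/Q^r$ with $B/A$ using the isomorphism $I^r\xrightarrow{\sim} B$ already displayed above the lemma.

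For part (1), since $A$ is Cohen--Macaulay of dimension one and $a$ is a system of parameters, $\e_0(Q)=\ell_A(A/Q)$ by the standard formula. Because $Q$ is a reduction of $I$, the modules $I^n$ and $Q^n$ have the same asymptotic length growth, so $\e_0(I)=\e_0(Q)=\ell_A(A/Q)$.

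For part (2), note $Q^{n+1}\subseteq I^{n+1}$, so
\[
\ell_A(A/I^{n+1}) \;=\; \ell_A(A/Q^{n+1}) \;-\; \ell_A(I^{n+1}/Q^{n+1}) \;=\; (n+1)\e_0(I) \;-\; \ell_A(I^{n+1}/Q^{n+1}),
\]
using $\ell_A(A/Q^{n+1})=(n+1)\ell_A(A/Q)$, which holds since $a$ is a nonzerodivisor (so $Q^k/Q^{k+1}\cong A/Q$ for every $k$). For $n\ge r$ the relation $I^{r+1}=QI^r$ gives by induction $I^{n+1}=a^{\,n+1-r}I^r$ and $Q^{n+1}=a^{\,n+1-r}Q^r$; since multiplication by $a^{\,n+1-r}$ is an isomorphism on the ambient ring, it induces $I^{n+1}/Q^{n+1}\cong I^r/Q^r$. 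Hence $\ell_A(I^{n+1}/Q^{n+1})=\ell_A(I^r/Q^r)$ is constant for $n\ge r$, and comparing with $\H_I(n)=\e_0(I)(n+1)-\e_1(I)$ yields $\e_1(I)=\ell_A(I^r/Q^r)$.

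For the identification $\ell_A(I^r/Q^r)=\ell_A(B/A)$, I use the $A$-linear isomorphism $\varphi\colon I^r\xrightarrow{\sim} I^r/a^r=B$ given by $x\mapsto x/a^r$ (which is bijective because $a^r$ is a nonzerodivisor, and surjects onto $B$ by the stabilization $B=I^r/a^r$ already recorded in the excerpt). Under $\varphi$ the submodule $Q^r=(a^r)$ maps exactly onto $A\subseteq B$, since $a^r s/a^r=s$ for $s\in A$. Therefore $\varphi$ induces an isomorphism $I^r/Q^r\cong B/A$, giving the second equality. Finally, the inclusion $B\subseteq\overline{A}$ noted above the lemma yields $\ell_A(B/A)\le\ell_A(\overline{A}/A)$.

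The argument is essentially a bookkeeping one; the only mild pitfall is making the stabilization step $I^{n+1}=a^{\,n+1-r}I^r$ and the nonzerodivisor use explicit, so that the passage from the length formula to the identification with $B/A$ is really an equality and not just an estimate.
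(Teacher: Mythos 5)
Your argument is correct and is exactly the standard proof of this statement; the paper itself offers no proof but simply cites \cite[Lemma 2.1]{GMP}, whose argument proceeds along the same lines (stabilization $I^{n+1}=a^{n+1-r}I^r$ for $n\ge r$, the length comparison $\ell_A(A/I^{n+1})=(n+1)\ell_A(A/Q)-\ell_A(I^{n+1}/Q^{n+1})$, and the identification of $I^r/Q^r$ with $B/A$ via division by $a^r$). All the steps you flag as potential pitfalls --- the nonzerodivisor hypothesis and the induction giving stabilization --- are handled correctly, so there is nothing to add.
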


Conversely, let $A \subseteq B \subseteq \overline{A}$ be an arbitrary intermediate ring and assume that $B$ is a finitely generated $A$-algebra.  We choose a  non-zerodivisor $a \in \m$ of $A$ so that $aB \subsetneq A$ and set $I = aB$.　   
Then $I$ is an $\fkm$-primary ideal of $A$ and 
$
I^2 = a^2B  = aI.
$
Hence $B=A\displaystyle{\left[ \frac{I}{a} \right] = \frac{I}{a}}$, so that we get the following.

\begin{cor}\label{2.2}$
\ell_A(B/A) = \e_1(I) \in \Delta(A).
$
\end{cor}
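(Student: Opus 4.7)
The plan is to identify the ideal $I = aB$ constructed in the paragraph preceding the statement as an explicit $\m$-primary ideal of $A$ admitting $Q = (a)$ as a reduction, and then invoke Lemma \ref{2.1}(2) to read off $\e_1(I) = \ell_A(B/A)$. In effect, the corollary just packages the previous construction into a statement about $\Delta(A)$.

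First I would verify that a non-zerodivisor $a \in \m$ with $aB \subsetneq A$ actually exists. Since $B \subseteq \overline{A} \subseteq \rmQ(A)$ and $B$ is a finitely generated $A$-module, the conductor $A :_A B$ is a non-zero ideal of $A$. Because $A$ is one-dimensional and Cohen--Macaulay, every non-zero element of $\m$ is a non-zerodivisor, so replacing a generator of the conductor by its product with a parameter if necessary produces an $a \in \m$ with $aB \subseteq A$ and $aB \ne A$ (else $1 \in aB \subseteq \m$, contradiction).

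Next I would record the three facts stated in the paragraph before the corollary:
(i) $I = aB$ is $\m$-primary in $A$, since $a$ is a parameter of the one-dimensional Cohen--Macaulay ring $A$, so $\sqrt{I} \supseteq \sqrt{(a)} = \m$, while $I \subsetneq A$ forces $\sqrt{I} \subseteq \m$;
(ii) $I^2 = a^2 B \cdot B = a^2 B = a \cdot aB = aI$, so $Q = (a)$ is a reduction of $I$ with reduction number $r \le 1$;
(iii) $I/a = aB/a = B$ inside $\rmQ(A)$, whence $A[I/a] = A[B] = B$.
With these in hand, Lemma \ref{2.1}(2) applies directly to $I$ and $Q$, giving $\e_1(I) = \ell_A(B/A)$. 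Since $I$ is an $\m$-primary ideal of $A$, the membership $\e_1(I) \in \Delta(A)$ is immediate from the definition of $\Delta(A)$.

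There is essentially no obstacle beyond this bookkeeping; the only mildly delicate point is choosing $a \in \m$ that still satisfies $aB \subsetneq A$, which is handled by the conductor argument above. The content of the corollary is precisely that the inequality $\e_1(I) \le \ell_A(\overline{A}/A)$ of Lemma \ref{2.1}(2) is, in fact, achieved by every intermediate ring $B$ between $A$ and $\overline{A}$ that is module-finite over $A$.
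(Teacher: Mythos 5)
Your proof is correct and follows essentially the same route as the paper, which likewise derives the corollary from the construction $I = aB$ in the preceding paragraph together with Lemma \ref{2.1}(2). The extra details you supply (the conductor argument for the existence of $a$, and the verification that $I$ is $\fkm$-primary with $Q=(a)$ a reduction of reduction number at most one) are exactly the bookkeeping the paper leaves implicit.
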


Let us  note the following.

\begin{lem}\label{2.3}
Let $(A, \m)$ be a Cohen-Macaulay local ring with $\dim A = 1$. Then
$$
\sup \Delta(A) \ge \ell_A(\overline{A}/A).
$$
\end{lem}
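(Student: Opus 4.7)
The idea is to use Corollary \ref{2.2} to realize $\ell_A(B/A)$ as an element of $\Delta(A)$ for every finitely generated intermediate $A$-algebra $A\subseteq B\subseteq \overline{A}$, and then to express $\ell_A(\overline{A}/A)$ as the supremum of such lengths via a directed-union argument. The plan has two steps: first, verify the hypothesis of Corollary \ref{2.2} for every such $B$; second, exhibit finitely generated $A$-subalgebras $B$ of $\overline{A}$ with $\ell_A(B/A)$ arbitrarily close to $\ell_A(\overline{A}/A)$.

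For the first step, let $B$ be any finitely generated $A$-subalgebra of $\overline{A}$. Since the generators of $B$ over $A$ are integral over $A$, $B$ is in fact a finitely generated $A$-module. As $B\subseteq \overline{A}\subseteq \rmQ(A)$, clearing denominators of a finite set of $A$-module generators of $B$ yields a non-zerodivisor $d\in A$ with $dB\subseteq A$; choosing in addition a non-zerodivisor $c\in\fkm$ (which exists since $\dim A=1$) and setting $a=cd$ gives a non-zerodivisor $a\in\fkm$ with $aB\subseteq \fkm\subsetneq A$. Corollary \ref{2.2} then produces the $\fkm$-primary ideal $I=aB$ with $\e_1(I)=\ell_A(B/A)$, so $\ell_A(B/A)\in\Delta(A)$.

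For the second step, every element of $\overline{A}$ is integral over $A$, so any finite subset of $\overline{A}$ generates a finitely generated $A$-subalgebra of $\overline{A}$. Hence $\overline{A}$ is the directed union of its finitely generated $A$-subalgebras, and correspondingly $\overline{A}/A=\bigcup_B B/A$ as a directed union of $A$-submodules. Now for any integer $n<\ell_A(\overline{A}/A)$ there exists a finitely generated $A$-submodule $N\subseteq \overline{A}/A$ with $\ell_A(N)>n$; lifting generators of $N$ to elements $y_1,\ldots,y_k\in\overline{A}$ and setting $B=A[y_1,\ldots,y_k]$ produces a finitely generated $A$-subalgebra of $\overline{A}$ with $N\subseteq B/A$, so $\ell_A(B/A)>n$. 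Combined with the first step, this forces $\sup \Delta(A)>n$ for every $n<\ell_A(\overline{A}/A)$, which is the stated inequality. The main technical point is the hypothesis verification in the first step; once this is in hand, the rest follows from the standard fact that integral closure is a filtered colimit of finitely generated subalgebras.
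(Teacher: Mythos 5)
Your proof is correct and follows essentially the same route as the paper: both realize $\ell_A(B/A)$ as an element of $\Delta(A)$ via Corollary \ref{2.2} for finitely generated subalgebras $B=A[y_1,\ldots,y_k]\subseteq\overline{A}$ whose generators span an $A$-submodule of length exceeding any given $n<\ell_A(\overline{A}/A)$. The only differences are cosmetic (you argue directly where the paper argues by contradiction) together with your explicit verification that a non-zerodivisor $a\in\fkm$ with $aB\subsetneq A$ exists, a hypothesis of Corollary \ref{2.2} that the paper simply asserts.
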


\begin{proof}
We set $s = \sup \Delta(A)$. Assume $s < \ell_A(\overline{A}/A)$ and choose elements $y_1,y_2,\ldots,y_{\ell}$ of $\overline{A}$ so that $\ell_A(\left[\sum_{i=1}^\ell Ay_i\right]/A) > s$. We consider the ring $B = A[y_1,y_2,\ldots,y_{\ell}]$. Then $A \subseteq B \subseteq \overline{A}$ and 
$$
s < \ell_A(\left[\sum_{i=1}^\ell Ay_i\right]/A) \le \ell_A(B/A),
$$
which is impossible, as $\ell_A(B/A) \in \Delta(A)$ by Corollary \ref{2.2}. Hence $s \ge \ell_A(\overline{A}/A)$.
\end{proof}

The assumption in the following corollary \ref{2.4} that the field $A/\fkm$ is infinite is necessary to assure a given $\fkm$-primary ideal $I$ of $A$ the existence of a reduction generated by a single element. We notice that even if the field $A/\fkm$ is finite, the existence is guaranteed when $\overline{A}$ is a discrete valuation ring (see Section 4).

\begin{cor}\label{2.4}
Let $(A, \m)$ be a Cohen-Macaulay local ring with $\dim A = 1$. Suppose that the field $A/\m$ is infinite. We then have
\[
\resizebox{\hsize}{!}{$
\Delta(A) = \{ \ell_A(B/A) \mid A \subseteq B \subseteq \overline{A}~\text{ is an intermediate ring which is a module-finite extension of}~A\}
$}.\]
\end{cor}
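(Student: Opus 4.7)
The plan is to establish the two inclusions separately, both of which are essentially already packaged in the preceding results; the role of the hypothesis that $A/\fkm$ is infinite is to produce principal reductions for \emph{every} $\fkm$-primary ideal, which is precisely the ingredient that makes the two constructions (going from an ideal to a birational extension, and going from a birational extension to an ideal) into mutual inverses at the level of $\rme_1$.

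For the inclusion $\supseteq$, I would simply invoke Corollary \ref{2.2}: any module-finite intermediate ring $A \subseteq B \subseteq \overline{A}$ produces, via the choice of a non-zerodivisor $a \in \fkm$ with $aB \subsetneq A$, an $\fkm$-primary ideal $I = aB$ whose first Hilbert coefficient is exactly $\ell_A(B/A)$. Hence every such $\ell_A(B/A)$ lies in $\Delta(A)$, with no use of the residue field assumption.

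For the inclusion $\subseteq$, let $\rme_1(I) \in \Delta(A)$ be arbitrary, with $I$ an $\fkm$-primary ideal of $A$. Since $A/\fkm$ is infinite, a standard prime-avoidance / generic-element argument guarantees a minimal reduction $Q$ of $I$ generated by $\dim A = 1$ element, say $Q = (a)$. This is the only spot where infiniteness of the residue field is needed. With this principal reduction in hand, the construction preceding Lemma \ref{2.1} produces a module-finite intermediate ring $B = A[I/a] \subseteq \overline{A}$, and Lemma \ref{2.1}(2) yields $\rme_1(I) = \ell_A(B/A)$. Thus $\rme_1(I)$ appears on the right-hand side.

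The main (and essentially only) obstacle is articulating that the residue-field hypothesis really does apply to every $\fkm$-primary $I$ simultaneously, so that the principal reduction exists without any additional assumption on $I$; once that is noted, the two inclusions drop out immediately from Lemma \ref{2.1} and Corollary \ref{2.2}, and no further computation is required. I would close by remarking that the containment $A[I/a] \subseteq \overline{A}$ is automatic because $B$ is module-finite over $A$ and sits inside $\rmQ(A)$, so it consists of elements integral over $A$.
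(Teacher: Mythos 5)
Your proposal is correct and follows exactly the paper's own argument: the inclusion $\Delta(A) \subseteq \Gamma(A)$ via a principal reduction $Q=(a)$ (guaranteed by the infinite residue field) and $B = A[I/a]$ with Lemma \ref{2.1}(2), and the reverse inclusion via Corollary \ref{2.2}. No differences worth noting.
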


\begin{proof} Let $\Gamma (A)$ denote the set of the right hand side.
Let $I$ be an $\m$-primary ideal of $A$ and choose a reduction $Q=(a)$ of $I$. We put $B=A\displaystyle{\left[ \frac{I}{a} \right]}$. Then  $B$ is a module-finite extension of $A$ and Lemma \ref{2.1} (2) shows $\e_1(I) = \ell_A(B/A)$. Hence $\Delta(A) \subseteq  \Gamma (A)$. The reverse inclusion follows from Corollary \ref{2.2}.
\end{proof}

We finish the proof of Theorem \ref{1.2}.

\begin{proof}[Proof of Theorem $\ref{1.2}$]
By Lemma \ref{2.3} it suffices to show $\sup \Delta(A) \le \ell_A(\overline{A}/A)$. Enlarging the residue class field $A/\fkm$ of $A$, we may assume that the field $A/\fkm$ is infinite. Let $I$ be an $\m$-primary ideal of $A$ and choose $a \in I$ so that $aA$ is a reduction of $I$. Then 
$$
\e_1(I) \leq \ell_{A}(\overline{A}/A)
$$
by Lemma \ref{2.1} (2). Hence the result.
\end{proof}

%%%%%%%%%%%%%%%%%%%%%%%%%%%%%%%%%%%%%%%%%%%%%%%%%%%%%%%%%%%%%%%%%%%%%%%%%%%%%%%%%%%%%%%%%%%%%%%%%%%%%%%%%%%%%%%%%%%%%%%%%%%%%%%%%%%%%%%%%%%%%%%%%%%%%%%%%%%%%%%%%%%%%%%%%%%%%%%%   Section3    %%%%%%%%%%%%%%%%%%%%%%%%%%%%%%%%%%%%%%%%%%%%%%%%%%%%%%%%%%%%%%%%%%%%%%%%%%%%%%%%%%%%%%%%%%%%%%%%%%%%%%%%%%%%%%%%%%%%%%%%%%%%%%%%%%%%%%%%%%%%%%%%%%%%%%%%%%%%%%%%%%%%%%%%%%%%%%%%%%%%%%%%%%%%%%%%%%%%%%%%%%%%%%%%

\section{Proof of Theorem \ref{1.1}}
Let us prove Theorem \ref{1.1}. Let $(A, \m)$ be a Noetherian local ring with $d = \dim A > 0$. We begin with the following.

\begin{lem}\label{3.1}
Suppose that $\Delta(A)$ is a finite set. Then $d = 1$.
\end{lem}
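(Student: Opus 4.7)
The plan is to prove the contrapositive: assume $d = \dim A \geq 2$ and show that the powers $\{\fkm^n\}_{n\geq 1}$ yield first Hilbert coefficients $\rme_1(\fkm^n)$ that are unbounded, contradicting the finiteness of $\Delta(A)$.

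The heart of the argument is a closed-form expression for $\rme_1(\fkm^n)$ in terms of $\rme_0(\fkm)$, $\rme_1(\fkm)$, $d$, and $n$. I would start from the Hilbert polynomial identity
$$
\ell_A(A/\fkm^{N+1}) \;=\; \sum_{i=0}^{d}(-1)^{i}\rme_{i}(\fkm)\binom{N+d-i}{d-i}\qquad(N \gg 0),
$$
and substitute $N = n(k+1) - 1$. The left-hand side becomes $\ell_A(A/(\fkm^n)^{k+1})$, which for $k \gg 0$ is also equal to $\sum_{i}(-1)^i\rme_i(\fkm^n)\binom{k+d-i}{d-i}$. Expanding each binomial as a polynomial in $k$ and matching the coefficients of $k^d$ and $k^{d-1}$ on both sides yields the standard identity $\rme_0(\fkm^n) = n^d\rme_0(\fkm)$ together with the closed form
$$
\rme_1(\fkm^n) \;=\; n^{d-1}\left[\rme_1(\fkm)+\frac{(d-1)(n-1)}{2}\rme_0(\fkm)\right].
$$

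From this, the conclusion is immediate: since $A$ has positive dimension and $\fkm$ is $\fkm$-primary, $\rme_0(\fkm)$ is a positive integer, and since $d \geq 2$, the term $\frac{(d-1)(n-1)}{2}\rme_0(\fkm)\, n^{d-1}$ grows without bound as $n \to \infty$. Hence $\{\rme_1(\fkm^n)\}_{n \geq 1} \subseteq \Delta(A)$ is unbounded, contradicting the hypothesis. Therefore $d = 1$.

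The only real obstacle is the polynomial bookkeeping. The binomial $\binom{nk + n + d - 1 - i}{d - i}$ has degree $d - i$ in $k$, so only the terms $i = 0$ and $i = 1$ contribute to the coefficient of $k^{d-1}$ on the left, and these two contributions must be combined and compared against the analogous two terms on the right. This is elementary but error-prone; it can be sanity-checked against $A = k[[x_1, \ldots, x_d]]$, where $\rme_0(\fkm) = 1$ and $\rme_i(\fkm) = 0$ for $i \geq 1$, and the formula reduces to $\rme_1(\fkm^n) = \frac{(d-1)(n-1)n^{d-1}}{2}$, recovering $\binom{n}{2}$ when $d = 2$ and $n^2(n-1)$ when $d = 3$.
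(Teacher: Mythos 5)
Your proposal is correct and follows essentially the same route as the paper: substitute $N=nk+n-1$ into the Hilbert polynomial, compare the coefficients of $k^d$ and $k^{d-1}$, and observe that for $d\ge 2$ the resulting formula for $\rme_1(I^n)$ (your closed form agrees with the paper's after expanding) is unbounded in $n$. The paper runs the computation for an arbitrary $\fkm$-primary ideal $I$ rather than specializing to $I=\fkm$, but this is immaterial.
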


\begin{proof} Let $I$ be an $\m$-primary ideal of $A$. Then for all  $k \ge 1$
$$
\rme_0(I^k) = k^d{\cdot}\rme_0(I) \ \ \mbox{and}\ \  \rme_1(I^k) = \frac{d-1}{2}{\cdot}\rme_0(I){\cdot}k^d + \frac{2\rme_1(I) - \rme_0(I){\cdot}(d-1)}{2}{\cdot}k^{d-1}.
$$
In fact, we have
$$
(1) \ \ \ 
\ell_A(A/({I^k})^{n+1}) = \e_0(I^k)\binom{n+d}{d}-\e_1(I^k)\binom{n+d-1}{d-1}+\cdots+(-1)^d\e_d(I^k)
$$
for $n\gg 0$, while 
\begin{eqnarray*}
(2) \ \ \ \ell_A(A/({I^k})^{n+1}) &=& \ell_A(A/I^{(kn+k-1)+1})\\
                        &=& \e_0(I)\binom{(kn+k-1)+d}{d}-\e_1(I)\binom{(kn+k-1)+d-1}{d-1}\\
                        & &  + \cdots+(-1)^d\e_d(I), 
\end{eqnarray*}
\begin{eqnarray*}
\binom{kn+k+d-1}{d} &=&k^d\binom{n+d}{d}+ a \binom{n+d - 1}{d-1} + (lower~ terms), \ \ \mbox{and} \\
\binom{kn+k+d-2}{d-1} &=&k^{d-1}\binom{n+d-1}{d-1} + (lower~ terms), 
\end{eqnarray*}
where
$$
a= k^{d-1}\Big(k+\frac{d-1}{2}\Big) - \frac{k^d}{2}(d+1).
$$
Comparing the coefficients of $n^d$ in equations $(1)$ and $(2)$, we see
$$
\rme_0(I^k) = k^d{\cdot}\rme_0(I).
$$
We similarly have
\begin{eqnarray*}
\e_1(I^k) &=& -\e_0(I)a +\e_1(I)k^{d-1}\\
　　　　　&=& -\e_0(I)\Big(k^d+\frac{d-1}{2}k^{d-1} - \frac{d+1}{2}k^d\Big) +\e_1(I)k^{d-1} \ \ \mbox{and}\ \ \\
          &=& \frac{d-1}{2}{\cdot}\rme_0(I){\cdot}k^d + \frac{2\rme_1(I) - \rme_0(I){\cdot}(d-1)}{2}{\cdot}k^{d-1},
\end{eqnarray*}
considering $n^{d-1}$.
Hence $d=1$, if the set $\{\e_1(I^k) \mid k \ge 1\}$ is finite.
\end{proof}

Lemma \ref{3.1} and the following estimations finish the proof of Theorem \ref{1.1}.  Remember that $\overline{A}$ is a finitely generated $A$-module if and only if the $\fkm$-adic completion $\widehat{A}$ of $A$ is a reduced ring, provided $A$ is a Cohen-Macaulay local ring with $\dim A = 1$.

\begin{thm}\label{3.2}
Let $(A,\m)$ be a Noetherian local ring with $\dim A = 1$ and set $B= A/\H_\fkm^0(A)$. Then
\begin{eqnarray*}
\sup\Delta(A) &=& \ell_{B}(\overline{B}/B)-\ell_A(\rmH_\fkm^0(A)) \ \ \ \text{and} \\
\inf\Delta(A) &=& -\ell_A(\rmH_\fkm^0(A)).
\end{eqnarray*}
\end{thm}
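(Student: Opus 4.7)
The plan is to reduce the statement to the Cohen-Macaulay case by passing to $B := A/W$, where $W := \rmH^0_\fkm(A)$, and then to invoke Theorem~\ref{1.2}. Since $\ell_A(W) < \infty$ and $\dim A = 1$, a short local cohomology argument for $0 \to W \to A \to B \to 0$ gives $\dim B = 1$ and $\depth B \ge 1$, so $B$ is Cohen-Macaulay of dimension one and Theorem~\ref{1.2} applies to $B$. Moreover, the projection $\pi\colon A \twoheadrightarrow B$ sends every $\fkm$-primary ideal $I$ of $A$ to the $\fkm B$-primary ideal $IB$, and conversely every $\fkm B$-primary $J$ equals $IB$ for $I := \pi^{-1}(J)$.

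The main step will be to establish that, for every $\fkm$-primary ideal $I$ of $A$,
\[
\rme_1(I) \;=\; \rme_1(IB) - \ell_A(W).
\]
I would derive this from the exact sequence $0 \to W/(W \cap I^{n+1}) \to A/I^{n+1} \to A/(I^{n+1}+W) \to 0$, using $A/(I^{n+1}+W) \cong B/(IB)^{n+1}$ to obtain
\[
\ell_A(A/I^{n+1}) \;=\; \ell_A\bigl(W/(W \cap I^{n+1})\bigr) + \ell_B\bigl(B/(IB)^{n+1}\bigr).
\]
The crux is to show $W \cap I^{n+1} = 0$ for all $n \gg 0$. This should follow from the Artin--Rees lemma applied to $W \subseteq A$: there exists $c$ such that $W \cap I^{n+1} = I^{n+1-c}(W \cap I^c)$ for $n \ge c$, and since $W$ is annihilated by a sufficiently large power of $\fkm$ while $I^{n+1-c} \subseteq \fkm^{n+1-c}$, the right-hand side vanishes once $n$ is large. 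Substituting $\ell_A(W)$ for the first term and comparing coefficients in the two resulting Hilbert polynomials in $n$ then yields $\rme_0(I) = \rme_0(IB)$ together with the stated formula for $\rme_1$.

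Combined with the surjectivity of $I \mapsto IB$, this identity gives $\Delta(A) = \Delta(B) - \ell_A(W)$. Applying Theorem~\ref{1.2} to $B$ produces $\sup \Delta(B) = \ell_B(\overline{B}/B)$, which yields the stated supremum. For the infimum, Narita's positivity gives $\rme_1(J) \ge 0$ for every $\fkm B$-primary $J$ in the Cohen-Macaulay ring $B$, and any parameter ideal $Q = (a) \subseteq B$ satisfies $\ell_B(B/Q^{n+1}) = (n+1)\ell_B(B/Q)$, so $\rme_1(Q) = 0$; hence $\inf \Delta(B) = 0$ and $\inf \Delta(A) = -\ell_A(W)$. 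I expect the Artin--Rees vanishing to be the only step requiring genuine care, since $A$ itself may be neither reduced nor Cohen-Macaulay and the Section~2 machinery is not directly available; the rest is routine bookkeeping of Hilbert polynomial coefficients together with Theorem~\ref{1.2} and Narita's theorem.
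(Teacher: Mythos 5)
Your proposal is correct and follows essentially the same route as the paper: the same exact sequence $0 \to W/(W\cap I^{n+1}) \to A/I^{n+1} \to B/(IB)^{n+1} \to 0$ with Artin--Rees vanishing of $W\cap I^{n+1}$, the same coefficient comparison giving $\rme_1(I) = \rme_1(IB) - \ell_A(W)$, and the same appeal to Theorem~\ref{1.2} for the supremum and to nonnegativity of $\rme_1$ plus parameter ideals for the infimum. No substantive differences to report.
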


\begin{proof}
We set $W = \H_\fkm^0(A)$. Then $B = A/W$ is a Cohen-Macaulay local ring with $\dim A = 1$. Let $I$ be an $\m$-primary ideal of $A$.  We consider the exact sequence
$$
0 \rightarrow W/[I^{n+1} \cap W] \rightarrow A/I^{n+1} \rightarrow B/I^{n+1}B \rightarrow 0
$$
of $A$-modules. Then since $I^{n+1} \cap W=(0)$ for all $n \gg 0$, 
\begin{eqnarray*}
  \ell_A(A/I^{n+1}) &=& \ell_A(B/I^{n+1}B) + \ell_A(W) \\
                    &=& \e_0(IB)\binom{n+1}{1} - \e_1(IB) + \ell_A(W).
\end{eqnarray*}
Hence
$$
\e_0(I) = \e_0(IB)\ \ \text{and}\ \ 
\e_1(I) = \e_1(IB) - \ell_A(W) \ge -\ell_A(W),
$$
because $\e_1(IB) \ge 0$ by Lemma \ref{2.1} (2). If $I$ is a parameter ideal of $A$, then $IB$ is a parameter ideal of $B$ and
$$
\e_1(I) = \e_1(IB) - \ell_A(W) = -\ell_A(W).
$$
Thus from Theorem \ref{1.2} the estimations
\begin{eqnarray*}
\sup \Delta(A) & = & \sup \Delta(B)-\ell_A(W)   \ \ \\
                & = & \ell_B(\overline{B}/B) - \ell_A(W) \ \ \text{and}     \\
\inf \Delta(A) & = & - \ell_A(W)
\end{eqnarray*}
follow,  since every $\m B$-primary ideal $J$ of $B$ has the form $J = IB$ for some $\fkm$-primary ideal $I$ of $A$.
\end{proof}

%%%%%%%%%%%%%%%%%%%%%%%%%%%%%%%%%%%%%%%%%%%%%%%%%%%%%%%%%%%%%%%%%%%%%%%%%%%%%%%%%%%%%%%%%%%%%%%%%%%%%%%%%%%%%%%%%%%%%%%%%%%%%%%%%%%%%%%%%%%%%%%%%%%%%%%%%%%%%%%%%%%%%%%%%%%%%%%%   Section4    %%%%%%%%%%%%%%%%%%%%%%%%%%%%%%%%%%%%%%%%%%%%%%%%%%%%%%%%%%%%%%%%%%%%%%%%%%%%%%%%%%%%%%%%%%%%%%%%%%%%%%%%%%%%%%%%%%%%%%%%%%%%%%%%%%%%%%%%%%%%%%%%%%%%%%%%%%%%%%%%%%%%%%%%%%%%%%%%%%%%%%%%%%%%%%%%%%%%%%%%%%%%%%%%

\section{Examples}

We explore concrete examples. Let $0 < a_1 < a_2 < \ldots < a_\ell~(\ell \ge 1)$ be integers such that  $\operatorname{gcd}(a_1, a_2, \ldots, a_\ell)=1$. Let $V =k[[t]]$ be the formal power series ring over a field $k$. We set $A = k[[t^{a_1},t^{a_2},  \ldots, t^{a_\ell}]]$ and $H = \left<\sum_{i=1}^\ell c_ia_i \mid 0 \le c_i \in \Bbb Z \right>$. Hence $A$ is the semigroup ring of the numerical semigroup $H$. We have $V=\overline{A}$ and $\ell_A(V/A) = \sharp (\Bbb N \setminus H)$, where $\Bbb N = \{n \mid 0 \le n \in \Bbb Z\}$. Let $c=\rmc(H)$ be the conductor of $H$.

\begin{ex}\label{4.1}
Let $q = \sharp (\Bbb N \setminus H)$. Then $
\Delta(A)  = \{0, 1, \ldots, q\}.
$
\end{ex}

\begin{proof}
We may assume $q \ge 1$, whence $c \ge 2$. We write $\Bbb N \setminus H=\{c_1, c_2, \ldots, c_q \}$ with $1=c_1<c_2< \cdots <c_q=c-1 $ and set $B_i=A[t^{c_i},t^{c_{i+1}},\ldots ,t^{c_q}]$ for  each $1 \le i \le q$. Then the descending chain
$
V =B_1 \supsetneq B_2 \supsetneq \cdots \supsetneq B_q \supsetneq B_{q+1}:=A
$
of $A$-algebras gives rise to a composition series of the $A$-module $V/A$, since $\ell_A(V/A) = q$. Therefore $\ell_A(B_i/A)=q+1-i$ for all $1 \leq i \leq q+1$ and hence, setting $a=t^c$ and $I_i=aB_i ~(\subsetneq A)$, by Corollary \ref{2.2} we have $\e_1(I_i)=q+1-i$. Thus $\Delta (A) = \{ 0,1,\ldots ,q \}$ as asserted.
\end{proof}

Because $q = \frac{\rmc (H)}{2}$ if $H$ is symmetric (that is $A= k[[t^{a_1},t^{a_2},  \ldots, t^{a_\ell}]]$ is a Gorenstein ring), we readily have the following.

\begin{cor}\label{4.2}
Suppose that $H$ is symmetric. Then $\Delta (A) = \{ 0,1,\ldots ,\frac{\rmc(H)}{2} \}$.
\end{cor}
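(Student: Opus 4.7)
The plan is to deduce Corollary \ref{4.2} as a direct specialization of Example \ref{4.1} once the combinatorial identity $q = \frac{\rmc(H)}{2}$ is in hand. By Example \ref{4.1} we already know that $\Delta(A) = \{0, 1, \ldots, q\}$ with $q = \sharp(\Bbb N \setminus H)$, so the only thing left is to express $q$ in terms of the conductor under the symmetry hypothesis.

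First I would invoke the standard characterization of a symmetric numerical semigroup: $H$ is symmetric if and only if, for every $n \in \Bbb Z$, exactly one of $n$ and $\rmc(H)-1-n$ belongs to $H$. This sets up an involution $n \mapsto \rmc(H) - 1 - n$ on the interval $\{0, 1, \ldots, \rmc(H)-1\}$, which partitions this interval into $\frac{\rmc(H)}{2}$ pairs (one member of each pair lying in $H$, the other in $\Bbb N \setminus H$). Second, since every element of $\Bbb N \setminus H$ lies in $\{0, 1, \ldots, \rmc(H)-1\}$ (by the very definition of the conductor), this involution gives a bijection between $\Bbb N \setminus H$ and $H \cap \{0, 1, \ldots, \rmc(H)-1\}$. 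Hence $q = \sharp(\Bbb N \setminus H) = \frac{\rmc(H)}{2}$. Third, substituting into Example \ref{4.1} yields the claimed equality.

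There is essentially no obstacle: the result is a one-line consequence of Example \ref{4.1} and a well-known property of symmetric numerical semigroups. The only point worth a sentence of justification is the remark parenthetically asserted in the text, namely the identification of symmetric numerical semigroups with the Gorenstein property of $A = k[[t^{a_1}, \ldots, t^{a_\ell}]]$; this is Kunz's classical theorem and can simply be cited. Thus the proof reduces to: quote Example \ref{4.1}, apply the symmetry-pairing argument to obtain $q = \rmc(H)/2$, and conclude.
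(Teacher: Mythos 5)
Your proposal is correct and matches the paper's approach exactly: the paper also obtains the corollary by substituting $q = \frac{\rmc(H)}{2}$ into Example \ref{4.1}, merely asserting that identity without proof, whereas you supply the standard pairing argument via the involution $n \mapsto \rmc(H)-1-n$. The only detail worth making explicit is that this involution has no fixed points (a fixed point would force $n \in H$ iff $n \notin H$), which is what guarantees the interval splits into exactly $\frac{\rmc(H)}{2}$ pairs.
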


\begin{cor}\label{4.3}
Let $A = k[[t^{a},t^{a+1},  \ldots, t^{2a-1}]]$~$(a \ge 2)$. Then $\Delta (A) = \{0, 1, \ldots, a-1\}$. For the ideal 
$
I = (t^a, t^{a+1}, \ldots, t^{2a-2})
$ of $A$, one has
$$
\rme_1(I)= \left\{
\begin{array}{ll}
\rmr (A) -1 & (a = 2),\\
\rmr (A) & (a \ge 3)
\end{array}
\right.
$$ where $\rmr (A) =\ell_A(\Ext_A^1(A/\fkm, A))$ denotes the Cohen-Macaulay type of $A$.
\end{cor}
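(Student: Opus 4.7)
The proof is in two parts. For the first assertion $\Delta(A) = \{0, 1, \ldots, a-1\}$, I would apply Example~\ref{4.1}: it suffices to show $q := \sharp(\Bbb N \setminus H) = a - 1$. Every $n \geq a$ lies in $H$, for if $n = qa + r$ with $q \geq 1$ and $0 \leq r \leq a-1$, then $n = (q-1)a + (a+r)$ expresses $n$ via the generators $\{a, a+1, \ldots, 2a-1\}$; conversely $\{1, 2, \ldots, a-1\} \cap H = \emptyset$. Hence $\Bbb N \setminus H = \{1, 2, \ldots, a-1\}$.

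To compute $\rmr(A)$, I would reduce modulo the parameter $t^a$. The $k$-basis of $A/(t^a)$ consists of the residues of $1, t^{a+1}, \ldots, t^{2a-1}$; since $\m \cdot t^{a+j} \subseteq t^{2a}V \subseteq (t^a)$ for every $j \geq 1$ while $\m \cdot 1 = \m \not\subseteq (t^a)$, the socle is spanned by the last $a-1$ classes, so $\rmr(A) = a - 1$.

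For $\rme_1(I)$, the case $a = 2$ is immediate since $I = (t^2)$ is a parameter ideal of $A$, so $\rme_1(I) = 0 = \rmr(A) - 1$. For $a \geq 3$ I would apply Lemma~\ref{2.1}(2) with the candidate reduction $\alpha = t^a$. The core step is the identity $I^2 = \m^2$: the products $t^{a+i} \cdot t^{a+j}$ with $0 \leq i, j \leq a-2$ directly yield the $A$-generators $t^{2a}, t^{2a+1}, \ldots, t^{4a-4}$ of $I^2$, while the two remaining generators $t^{4a-3}, t^{4a-2}$ of $\m^2$ are recovered as $t^{2a} \cdot t^{2a-3}$ and $t^{2a} \cdot t^{2a-2}$, using $t^{2a-3}, t^{2a-2} \in A$ (valid precisely because $a \geq 3$). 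Since $\m^n = t^{na}V$ satisfies $t^a \cdot t^{na}V = \m^{n+1}$, an easy induction gives $I^n = \m^n$ and $I^{n+1} = t^a I^n$ for all $n \geq 2$, so $t^a$ is indeed a reduction of $I$. The blowup is then $B = A[I/t^a] = A[t^j : 0 \leq j \leq a-2] \supseteq A[t]$, which equals $V$ because every $f \in V$ decomposes into a polynomial of degree $< a$ (in $k[t] \subseteq A[t]$) and a tail $t^a h(t)$ of order $\geq a$ (in $A$). Hence $B = V$ and Lemma~\ref{2.1}(2) yields $\rme_1(I) = \ell_A(V/A) = a - 1 = \rmr(A)$.

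The principal obstacle I anticipate is the identity $I^2 = \m^2$: its top-degree generators $t^{4a-3}, t^{4a-2}$ do not arise as direct products of generators of $I$, and one must exploit the ring structure of $A$ (specifically, that $t^{2a-3}, t^{2a-2}$ already lie in $A$) to recover them; once this is secured, verifying the reduction property and identifying $A[I/t^a]=V$ is transparent.
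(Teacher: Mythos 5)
Your argument is correct, and its skeleton coincides with the paper's: both deduce the first assertion from Example \ref{4.1} via $\sharp(\Bbb N\setminus H)=a-1$, dispose of $a=2$ by noting that $I=(t^2)$ is a parameter ideal of a Gorenstein ring, and for $a\ge 3$ compute $\rme_1(I)=\ell_A(B/A)$ with $B=A\left[\frac{I}{t^a}\right]=k[[t]]$ using Lemma \ref{2.1}(2). You diverge in two places. First, you verify that $(t^a)$ is a reduction by the explicit computation $I^2=\fkm^2=t^{2a}V$ and $I^{n+1}=t^aI^n$ for $n\ge 2$, whereas the paper simply observes $IV=QV$ and invokes integrality over $Q$; your route is more hands-on but equally valid, and your remark that $t^{4a-3},t^{4a-2}$ must be recovered using $t^{2a-3},t^{2a-2}\in A$ (hence $a\ge 3$) is exactly the right point of care. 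Second, and more substantially, to identify $\ell_A(V/A)$ with $\rmr(A)$ you compute the type directly as the socle dimension of $A/(t^a)$, obtaining $a-1$ on both sides; the paper instead notes $A:_{\rmQ(A)}\fkm=k[[t]]$ (since $\fkm=t^aV$) and quotes Herzog--Kunz to get $\ell_A([A:_{\rmQ(A)}\fkm]/A)=\rmr(A)$. The paper's route explains conceptually why the answer is the Cohen--Macaulay type (the blowup coincides with $A:\fkm$), while yours is self-contained and avoids the citation; both are complete.
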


\begin{proof}
See Example \ref{4.1} for the first assertion. Let us check the second one. If $a=2$, then $A$ is a Gorenstein ring and $I$ is a parameter ideal of $A$, so that  $\e_1(I) = \rmr(A) -1 ~(=0)$. Let $a \ge 3$ and put $Q =(t^a)$. Then $Q$ is a reduction of $I$, since $IV = QV$. Because $A\displaystyle{\left[ \frac{I}{t^a} \right]}=k[[t]]$ and $\m = t^aV$, we get $A:_{\rmQ(A)}\m = k[[t]]$. Thus $
\e_1(I) = \ell_A(k[[t]]/A) = \ell_A([A:_{\rmQ (A)}\m]/A)=\rmr(A)
$
(\cite[Bemerkung 1.21]{HK}).
\end{proof}

\begin{rem}\label{4.4}
In Example \ref{4.3} $I$ is a canonical ideal of $A$ (\cite{HK}). Therefore  the equality  $\e_1(I) = \rmr(A)$ shows that  if $a \ge 3$, $A$ is not a Gorenstein ring but an almost Gorenstein  ring in the sense of \cite[Corollary 3.12]{GMP}.
\end{rem}

Let us consider local rings which are not analytically irreducible.

\begin{ex}\label{4.5}
Let $(R,\n)$ be a regular local ring with $n = \dim R \ge 2$. Let $X_1,X_2, \ldots, X_n$ be a regular system of parameters of $S$ and set $P_i =(X_j \mid 1 \le j \le n, ~j \ne i)$ for each $1 \le i \le n$. We consider the ring
$
A = R %\bigg
/ \bigcap_{i=1}^n P_i.
$
Then $A$ is a one-dimensional Cohen-Macaulay local ring with $\Delta (A) = \{0, 1, \ldots, n -1\}$.
\end{ex}

\begin{proof}
Let $x_i$ denote the  image of $X_i$ in $A$. We put $\fkp_i =(x_j \mid 1 \le j \le n,~j \ne i)$ and $\displaystyle B = \prod_{i=1}^n(A/\fkp_i)$. Then the homomorphism
$
\varphi : A \to B, ~a \mapsto (\overline{a}, \overline{a}, \ldots, \overline{a})$
is injective and $B = \overline{A}$. Since $\m B = \m$ and $\mu_A(B) = n$, $\ell_A(B/A) = n-1$. Let $\mathbf{e}_j =(0, \ldots, 0, \overset{j}{\check{1}}, 0, \ldots, 0)$ for  $1 \le j \le n$ and $\mathbf{e} = \displaystyle{\sum_{j=1}^n \mathbf{e}_j}$. Then $B=A\mathbf{e} +  \sum_{j=1}^{n-1}A\mathbf{e}_j$. We set $
B_i = A\mathbf{e} +  \sum_{j=1}^{i}A\mathbf{e}_j
$
for each $1 \le i \le n-1$. Then since $B_i =A[\mathbf{e}_1,\mathbf{e}_2,\ldots,\mathbf{e}_i]$, $B_i$ is a finitely generated $A$-algebra and $B_i \subsetneq B_{i+1}$. Hence 
$B=B_{n-1} \supsetneq B_{n-2} \supsetneq \cdots \supsetneq B_1 \supsetneq B_0:=A
$
gives rise to a composition series of the $A$-module $B/A$. Hence  $\Delta (A) = \{0, 1,\ldots, n -1\}$, as $\ell_A(B_i/A) = i$ for all $0 \le i \le n-1$.
\end{proof}

Let $A$ be a one-dimensional Cohen-Macaulay local ring. If $A$ is not a reduced ring, then the set $\Delta(A)$ must be infinite. Let us note one concrete example.

\begin{ex}
Let $V$ be a discrete valuation ring and let $A=V \ltimes V$ denote the  idealization of $V$ over $V$ itself. Then $\Delta (A) = \{n \mid 0 \le n \in \Z \}$.
\end{ex}

\begin{proof}
Let $K = \rmQ(V)$.  Then $\rmQ(A) = K \ltimes K$ and $\overline{A} = V \ltimes K$. We set $B_n = V \ltimes \displaystyle{\left (V{\cdot} \frac{1}{t^n} \right)}$ for $n \ge 0$. Then $A \subseteq B_n \subseteq \overline{A}$ and
\begin{eqnarray*}
\ell_A(B_n/A) &=& \ell_V(B_n/A) \\
              &=& \ell_V([V \oplus \displaystyle{\left (V{\cdot} \frac{1}{t^n} \right)}] /[V \oplus V]) \\
              &=& \ell_V(V{\cdot} \frac{1}{t^n}/V) \\
              &=& \ell_V(V/t^nV) \\
              &=& n.
\end{eqnarray*}
Hence $n \in \Delta(A)$ by  Corollary \ref{2.2}, so that $
\Delta (A) = \{n \mid 0 \le n \in \Z \}.
$
\end{proof}

\begin{ex}\label{4.7}
Let $K/k~(K \ne k)$ be a finite extension of fields and assume that there is no proper intermediate field between $K$ and $k$. Let $n=[K:k]$ and choose a $k$-basis $\{\omega_i\}_{1 \le i \le n}$ of $K$. Let $K[[t]]$ be the formal power series ring over $K$ and set $A = k[[\omega_1t, \omega_2t, \ldots, \omega_nt]]~\subseteq K[[t]]$. Then $\Delta (A) = \{0, n-1\}$.
\end{ex}

\begin{proof}
Let $V= k[[t]]$. Then $V= \displaystyle{\sum_{i=1}^n A \omega_i}$ and $V = \overline{A}$. Since $tV \subseteq A$, we have $\n=tV = \m$,  where $\m$ and $\n$ stands for the maximal ideals of $A$ and $V$, respectively. Therefore $\ell_A(V/A)=n-1$. Let $A \subseteq B \subseteq V$ be an intermediate ring. Then $B$ is a local ring. Let $\fkm_B$ denote the maximal ideal of $B$. We then have $\m= \m_B =\n$ since $\fkm = \fkn$ and therefore, considering the extension of residue class fields $
k = A/\fkn \subseteq B/\fkn \subseteq K = V/\fkn,$
we get $V=B$ or $B=A$. Since $V = \overline{A}$ is a discrete valuation ring, every $\m$-primary ideal of $A$ contains a reduction generated by a single element. Hence $\Delta (A) = \{ 0, n-1 \}$ by Corollary \ref{2.4}.
\end{proof}

\bigskip
%%%%%%%%%%%%%%%%%%%%%%%%%%%%%%%%%%%%%%%%%%%%%%%%%%%%%%%%%%%%%
%%%%%%%%%%%%%%%%%%%%%%%%%%%%%%%%%%%%%%%%%%%%%%%%%%%%%%%%%%%%%
%\addcontentsline{toc}{section}{references}

\end{document}